\newtheorem{theorem}{Theorem}
\newtheorem{lemma}{Lemma}
\newtheorem{prop}{Proposition}
\newtheorem{corollary}{Corollary}
\newcommand\R{{\mathbb R}}
\newcommand\Z{{\mathbb Z}}
\newcommand\N{{\mathbb N}}
\newcommand\T{{\mathbb T}}
\newcommand{\be}{\begin{equation}}
\newcommand{\ee}{\end{equation}}
\newcommand{\bea}{\begin{eqnarray}}
\newcommand{\eea}{\end{eqnarray}}
\newcommand{\barr}{\begin{array}}
\newcommand{\earr}{\end{array}}
\newcommand{\e}{\mathrm{e}}
\def\XXint#1#2#3{{\setbox0=\hbox{$#1{#2#3}{\int}$}
     \vcenter{\hbox{$#2#3$}}\kern-.5\wd0}}
\renewcommand{\Re}{\operatorname{Re}}
\title[Jumps, cusps and fractals in the solution of BO]{Jumps, cusps and fractals in the solution of the periodic linear Benjamin-Ono equation}
\author[L Boulton, B Macpherson, B Pelloni]{Lyonell Boulton, Breagh Macpherson and Beatrice Pelloni}
\address{Heriot-Watt University \& Maxwell Institute for the Mathematical Sciences}
\date{\today}
\begin{document}

\begin{abstract}
We establish two complementary results about the regularity of the solution of the periodic initial value problem for the linear Benjamin-Ono equation. We first give a new simple proof of the statement that, for a dense countable set of the time variable, the solution is a finite linear combination of copies of the initial condition and of its Hilbert transform. In particular, this implies that discontinuities in the initial condition are propagated in the solution as logarithmic cusps.  We then show that,  if the initial condition is of bounded variation (and even if it is not continuous), for almost every time the graph of the solution in space is continuous but fractal, with upper Minkowski dimension equal to $\frac32$. In order to illustrate this striking dichotomy, in the final section we include accurate numerical evaluations of the solution profile, as well as estimates of its box-counting dimension for two canonical choices of irrational time.  
\end{abstract}

\maketitle


\section{Introduction}
The phenomenon of cusp revivals for dispersive time-evolution equations describes the emergence of logarithmic cusp singularities in the solution, even for bounded initial conditions. These singularities are fully characterised in terms of the jump discontinuities of the  initial condition, and they occur for values of the time variable which are rationally related to the length of the period. 

In this note we analyse the regularity and behaviour of the solution of one such equation, the linear Benjamin-Ono (BO) equation, on the torus $\mathbb{T}=(-\pi,\pi]$, \be\label{lBO}
\begin{aligned} &\partial_t u(x,t)=\mathcal{H}\partial_x^2u(x,t), \qquad x\in\mathbb T, \quad t\in \mathbb{R}, \\ & u(x,0)=u_0(x);\end{aligned} 
\ee 
where $\mathcal{H}$ is the periodic Hilbert transform.
For this boundary-value problem, it is known that at any rational time, $t\in2\pi\mathbb{Q}$, the solution has a simple closed expression in terms of the initial condition, $u_0:\mathbb{T}\longrightarrow \mathbb{R}$ and of its Hilbert transform \cite{boulton2020new}. This implies that, at these times, initial jump discontinuities of $u_0$ generate cusp singularities in the solution $u(\cdot,t)$. To our knowledge, the regularity of the solution at other times has not been previously analysed.  

This behaviour is in stark contrast with the properties of the solution of a closely related problem, the linear Schr\"odinger equation on the torus,
\be\label{lsc}
\begin{aligned} &\partial_t v(x,t)=-i\partial_x^2v(x,t), \qquad x\in\mathbb T, \quad t\in \mathbb{R}, \\ & v(x,0)=v_0(x).\end{aligned} 
\ee
In this case, if  $v_0$ is of bounded variation, then the solution is bounded for all $t\in \mathbb{R}$, see \cite{rodnianski2000fractal}.    

Our purpose is to highlight a simple argument that characterises the regularity of the solution to \eqref{lBO} in terms of the solution to \eqref{lsc}.  This argument yields a new short, rigorous proof of the phenomenon of cusp revivals for \eqref{lBO}, for any discontinuous $u_0\in \mathrm{L}^2(\mathbb{T})$. In addition, it leads to the proof of the following striking behaviour: despite displaying cusp singularities for all $t$ in a dense subset of $\mathbb{R}$, the solution of \eqref{lBO} has the same regularity, as measured in Besov spaces, as the solution of \eqref{lsc}. In particular, it is H\"older continuous for almost every $t\in\mathbb{R}$, provided $u_0$ is of bounded variation. 

Cusp revivals are known to occur in linear integro\--\-differ\-ential equations such as the linear BO equation. This was first observed in \cite{boulton2020new}, where a formal proof of the general case was given. They are also known to occur in linear differential dispersive boundary-value problems of odd order or with dislocations, where they are induced by the boundary or dislocation conditions, \cite{bfps2024}. In all these cases, the solution at rational times is the sum of three components: a finite linear combination of translated copies of a function containing all the jump discontinuities of the initial condition, the Hilbert transform of this function and a continuous function. The Hilbert transform of the jump discontinuities is responsible for the logarithmic cusps in the solution. 

Our main contribution is the statement of the next theorem, which formulates rigorously the properties of the solution to \eqref{lBO}. In this case, the continuous function component at rational times is identically zero, and the conclusion of the theorem is analogous to the quantum Talbot effect described in \cite{berry1996quantum} for the equation \eqref{lsc}, which was examined in detail in \cite{taylor2003schrodinger,olver2010dispersive,kapitanski1999does,rodnianski2000fractal} (see also \cite{chousionis2014fractal} and \cite[Section~2.2]{erdougan2016dispersive}).

\smallskip
Here and elsewhere below, BV$(\mathbb T)$ is the space of functions of bounded variation, $\mathrm{C}^{\alpha}(\mathbb{T})$ is the space of H\"older continuous functions with exponent $\alpha\in(0,1)$ and $\mathrm{H}^r(\mathbb T)$ is the $\mathrm{L}^2$-Sobolev space with derivative order $r>0$.

\begin{theorem}    \label{revfracBO}
	Let $u_0:\mathbb{T}\longrightarrow \mathbb{R}$. The following holds true for $u$ the solution to \eqref{lBO}. 
\begin{enumerate}[(a)]
\item If $u_0\in \mathrm{L}^2(\T)$, then, for $p,\,q\in \mathbb{N}$ co-prime,
\be\label{bosum}
u\Big(x,2\pi\frac{ p}{q}\Big)=\frac{1}{q}\operatorname{Re}\sum_{k=0}^{q-1}\left[\sum_{m=0}^{q-1}\e^{2\pi i\frac{km+pm^2}{q}}\right](I+i\mathcal{H}) u_0\Big(x-2\pi\frac{k}{q}\Big).
\ee
\item If $u_0\in \mathrm{BV}(\mathbb T)$, then, for almost all $t\in \R$,
$u(\cdot, t)\in \mathrm{C}^\alpha(\T)$ for all $\alpha\in[0,\frac12)$.
\item If $u_0\not\in \mathrm{H}^{r_0}(\T)$ for some $r_0\in\big[\frac12,1\big)$, then, for almost all $t\in \R$, $u(\cdot,t)\not\in \mathrm{H}^r(\mathbb{T})$ for any $r>r_0$.
\end{enumerate}\end{theorem}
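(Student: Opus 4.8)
The plan is to reduce all three statements to the (known) regularity theory of the periodic linear Schr\"odinger flow \eqref{lsc} by means of a single Fourier-side identity, which is the engine of the whole argument. In Fourier series the flow \eqref{lBO} acts as multiplication by the unimodular symbol $e^{itn|n|}$; since $u_0$ is real-valued one has $\widehat u(-n,t)=\overline{\widehat u(n,t)}$, and on $\{n>0\}$ the symbol $n|n|=n^2$ is exactly the dispersion relation of \eqref{lsc}. Writing $P_+$ for the Riesz projection onto $\{n>0\}$ and $v(\cdot,t)$ for the solution of \eqref{lsc} with $v_0=u_0$, this yields
\[
u(\cdot,t)=\widehat u_0(0)+2\Re\bigl(P_+u(\cdot,t)\bigr),\qquad P_+u(\cdot,t)=P_+v(\cdot,t).
\]
Since $2P_+=(I+i\mathcal H)-\Pi_0$ with $\Pi_0$ the projection onto the constants, and $\mathcal H$ is bounded on $\mathrm H^r(\T)$ for every $r$ and on $\mathrm C^\alpha(\T)$ for $0<\alpha<1$ (Privalov), the correspondence $v(\cdot,t)\mapsto u(\cdot,t)$ preserves and reflects the relevant Sobolev and H\"older (indeed Besov) regularity, with constants independent of $t$.

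For part (a) I would fix co-prime $p,q$ and $t=2\pi p/q$ and note that $n\mapsto e^{2\pi i pn^2/q}$ depends only on $n\bmod q$, so on $\{n\ge 0\}$ it is a finite exponential sum: orthogonality of the characters mod $q$ gives $e^{2\pi i pn^2/q}=\sum_{k=0}^{q-1}G_k\,e^{-2\pi i kn/q}$ with $G_k=\tfrac1q\sum_{m=0}^{q-1}e^{2\pi i(km+pm^2)/q}$ and $\sum_k G_k=1$. Because the symbol $e^{-2\pi i kn/q}$ is translation by $2\pi k/q$, this says $P_+v(x,t)=P_+\sum_k G_k u_0(x-2\pi k/q)$; inserting this into the displayed identity and using $\widehat u_0(0)\in\R$, $\sum_k G_k=1$, and that $(I+i\mathcal H)$ commutes with translation collapses everything to \eqref{bosum}. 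This reproves the rational-time revival formula with nothing beyond Gauss-sum bookkeeping.

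For part (b), by the correspondence it suffices to show that for a.e.\ $t$ the Schr\"odinger solution $v(\cdot,t)\in\mathrm C^\alpha(\T)$ for every $\alpha<\tfrac12$; then so do $P_+v(\cdot,t)$ and $u(\cdot,t)$. This is the classical ``gain of half a derivative at almost every time'' for $e^{it\partial_x^2}$ acting on $\mathrm{BV}$ data (see \cite{kapitanski1999does,rodnianski2000fractal,chousionis2014fractal,erdougan2016dispersive}), which I would either quote or reprove as follows. Decompose $v(\cdot,t)=\sum_N P_Nv(\cdot,t)$ into Littlewood--Paley pieces; for $u_0\in\mathrm{BV}$, $|\widehat u_0(n)|\lesssim|n|^{-1}$, so $\|P_Nu_0\|_{\mathrm L^2}^2\lesssim N^{-1}$. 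The one nontrivial input is the exponential-sum bound
\[
\int_0^{2\pi}\Bigl\|\sum_{|n|\sim N}a_n\,e^{itn^2+inx}\Bigr\|_{\mathrm L^\infty_x}^2\,dt\;\lesssim_\varepsilon\;N^{\varepsilon}\sum_{|n|\sim N}|a_n|^2 ,
\]
a weak $\mathrm L^2_t\mathrm L^\infty_x$ Strichartz-type estimate on the torus arising from the circle-method (major/minor-arc) analysis of the Gauss sums $\sum e^{2\pi i(p/q)n^2}$ together with the divisor bound. Granting it, $\int_0^{2\pi}\sup_N\bigl(N^{\alpha}\|P_Nv(\cdot,t)\|_{\mathrm L^\infty_x}\bigr)^2dt\le\sum_N N^{2\alpha}\!\int_0^{2\pi}\|P_Nv(\cdot,t)\|_{\mathrm L^\infty_x}^2dt\lesssim\sum_N N^{2\alpha-1+\varepsilon}<\infty$ for $\alpha<\tfrac12$; so for a.e.\ $t$ the $B^{\alpha}_{\infty,\infty}=\mathrm C^\alpha$ norm of $v(\cdot,t)$ is finite, and intersecting over a sequence $\alpha_j\uparrow\tfrac12$ delivers it for all $\alpha<\tfrac12$ simultaneously. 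I expect this exponential-sum estimate — equivalently, pinning down the precise Schr\"odinger smoothing theorem to cite — to be the main obstacle; everything around it is soft.

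For part (c), the flow is a unimodular Fourier multiplier, $\bigl|e^{itn|n|}\bigr|\equiv1$, hence $\|u(\cdot,t)\|_{\mathrm H^r}=\|u_0\|_{\mathrm H^r}$ for every $r$ and every $t$, so $u(\cdot,t)\in\mathrm H^r\iff u_0\in\mathrm H^r$; since $u_0\notin\mathrm H^{r_0}$ forces $u_0\notin\mathrm H^r$ for every $r>r_0$, the conclusion follows for \emph{every} $t\in\R$, the ``almost every'' being retained only to parallel (b). Parts (b) and (c) together are what will subsequently trap the upper Minkowski dimension of the graph of $u(\cdot,t)$ at $\tfrac32$ for a.e.\ $t$.
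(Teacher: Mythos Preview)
Your reduction via the identity $u(\cdot,t)=\widehat{u_0}(0)+2\Re\bigl(P_+v(\cdot,t)\bigr)$ is exactly Proposition~\ref{uvmainrel}/\eqref{withSzego} of the paper, and your treatment of part~(a) coincides with the paper's: quote the rational-time revival formula for \eqref{lsc} and push it through the identity.

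For part~(c) you take a more direct and strictly simpler route than the paper. The paper passes through \eqref{withSzego} and invokes \cite[Lemma~3.2]{chousionis2014fractal} to argue that $\Re(e^{-i\partial^2 t}\Pi u_0)\notin\bigcup_{r>r_0}\mathrm H^r$ for almost every $t$. You observe instead that the BO multiplier $e^{itn|n|}$ is unimodular, so $\|u(\cdot,t)\|_{\mathrm H^r}=\|u_0\|_{\mathrm H^r}$ for every $r$ and every $t$; this yields the conclusion for \emph{all} $t$ with no external input. Your argument is correct and sharper.

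For part~(b) the two routes genuinely diverge. The paper explicitly declines to transfer through \eqref{withSzego}, on the grounds that $u_0\in\mathrm{BV}$ does not imply $\Pi u_0\in\mathrm{BV}$, and instead reproves the $\mathrm{BV}\to\mathrm C^\alpha$ smoothing from scratch for the BO kernel $H_t$ (via an exponential-sum bound, Lemma~\ref{goodt}, based on the Khinchin--L\'evy theorem). Your argument bypasses that obstruction cleanly: you apply the Schr\"odinger smoothing theorem to $v(\cdot,t)=e^{-i\partial^2 t}u_0$ with $u_0\in\mathrm{BV}$, obtaining $v(\cdot,t)\in\mathrm C^\alpha$ for a.e.\ $t$, and only \emph{then} apply $P_+=\tfrac12(I+i\mathcal H)-\tfrac12\Pi_0$, using Privalov's theorem that $\mathcal H$ is bounded on $\mathrm C^\alpha$ for $0<\alpha<1$. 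This is valid; the paper's objection concerns applying the BV hypothesis to $\Pi u_0$, which your order of operations never requires. What the paper's approach buys is self-containment (it avoids citing Privalov and makes the full-measure set of good times explicit), while yours buys brevity and a transparent reduction to the Schr\"odinger literature. Your alternative sketch for reproving the Schr\"odinger smoothing via an $L^2_t\mathrm L^\infty_x$ estimate and the divisor bound is also a legitimate, and different, path to the same endpoint as the paper's continued-fraction Lemma~\ref{goodt}.
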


Part (a) of this theorem states that the solution, at  any time of the form  $t=2\pi p/q$, is a superposition of translated copies of the initial profile and of its Hilbert transform. This implies the validity of the cusp revival phenomenon, first described in \cite{boulton2020new}, because the periodic Hilbert transform of a jump discontinuity generates a logarithmic cusp singularity. We  discuss and illustrate this property of $\mathcal H$  in Section~\ref{s3}, see   Figure \ref{Fig1}.

The next corollary states the validity of the analogous statement for Schr\"odinger's equation \eqref{lsc}, established in \cite{rodnianski2000fractal}.

\begin{corollary}\label{lbodim}
If $u_0\in \mathrm{BV}(\T)\setminus \bigcup_{s>\frac12} \mathrm{H}^s(\mathbb{T})$, then  the upper Minkowski dimension of the graph of $u(\cdot,t)$ is equal to $\frac32$ for almost every $t\in\mathbb{R}$.
\end{corollary}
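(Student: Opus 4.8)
Throughout, write $\Gamma(f)=\{(x,f(x)):x\in\T\}$ for the graph of a continuous $f\colon\T\to\R$ and $\overline{\dim}_{\mathrm M}$ for the upper Minkowski (box-counting) dimension. The plan is to read off the two matching bounds $\overline{\dim}_{\mathrm M}\Gamma(u(\cdot,t))\le\tfrac32$ and $\overline{\dim}_{\mathrm M}\Gamma(u(\cdot,t))\ge\tfrac32$, for almost every $t$, directly from parts~(b) and~(c) of Theorem~\ref{revfracBO}, using only the classical correspondence between the regularity of a function and the box dimension of its graph. For the upper bound, part~(b) gives a full-measure set $T_b\subseteq\R$ such that, for every $t\in T_b$, $u(\cdot,t)\in\mathrm C^{\alpha}(\T)$ for all $\alpha\in[0,\tfrac12)$; in particular $u(\cdot,t)$ is continuous, so $\Gamma(u(\cdot,t))$ is a compact curve. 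The elementary estimate that the graph of an $\alpha$-H\"older function on a bounded interval is covered by $O(\delta^{-(2-\alpha)})$ mesh squares of side $\delta$ gives $\overline{\dim}_{\mathrm M}\Gamma(u(\cdot,t))\le 2-\alpha$ for each $\alpha<\tfrac12$, and letting $\alpha\uparrow\tfrac12$ yields $\overline{\dim}_{\mathrm M}\Gamma(u(\cdot,t))\le\tfrac32$ on $T_b$.

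For the lower bound, observe that $u_0\in\mathrm{BV}(\T)\setminus\bigcup_{s>1/2}\mathrm H^{s}(\T)$ forces $u_0\notin\mathrm H^{r_0}(\T)$ for every $r_0\in(\tfrac12,1)$. Applying part~(c) with $r_0=\tfrac12+\tfrac1k$ for each integer $k\ge3$ produces a full-measure set $T_k\subseteq\R$ with $u(\cdot,t)\notin\mathrm H^{r}(\T)$ for all $r>\tfrac12+\tfrac1k$ whenever $t\in T_k$; hence $T_\infty:=T_b\cap\bigcap_{k\ge3}T_k$ still has full measure, and for $t\in T_\infty$ the function $f:=u(\cdot,t)$ simultaneously lies in $\bigcap_{\alpha<1/2}\mathrm C^{\alpha}(\T)$ and outside $\bigcup_{s>1/2}\mathrm H^{s}(\T)$. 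It then suffices to invoke the standard lemma -- used in the same way in \cite{rodnianski2000fractal,chousionis2014fractal} and \cite[Sec.~2.2]{erdougan2016dispersive} -- that any such $f$ has $\overline{\dim}_{\mathrm M}\Gamma(f)\ge\tfrac32$. Its proof goes by contradiction: if $\overline{\dim}_{\mathrm M}\Gamma(f)<\tfrac32-3\eta$ for some $\eta>0$, then the number $N(\delta)$ of $\delta$-mesh squares meeting $\Gamma(f)$ satisfies $N(\delta)\lesssim\delta^{-(3/2-3\eta)}$ for all small $\delta$; since $N(\delta)\gtrsim\delta^{-1}\sum_{|I|=\delta}\operatorname{osc}_I f$, this gives $\|f(\cdot+\delta)-f\|_{\mathrm L^{1}(\T)}\lesssim\delta^{1/2+3\eta}$, and interpolating against $\|f(\cdot+\delta)-f\|_{\mathrm L^\infty(\T)}\lesssim_{\varepsilon}\delta^{1/2-\varepsilon}$ (valid since $f\in\mathrm C^{1/2-\varepsilon}$) gives $\|f(\cdot+\delta)-f\|_{\mathrm L^{2}(\T)}^{2}\lesssim\delta^{1-\varepsilon+3\eta}$; by Parseval the dyadic block $\sum_{|n|\sim\delta^{-1}}|\widehat f(n)|^{2}\lesssim\delta^{1-\varepsilon+3\eta}$ for all small $\delta$, and summing over dyadic scales shows $f\in\mathrm H^{1/2+\sigma}(\T)$ as soon as $2\sigma+\varepsilon<3\eta$, contradicting $f\notin\bigcup_{s>1/2}\mathrm H^{s}(\T)$. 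Together with the upper bound this yields $\overline{\dim}_{\mathrm M}\Gamma(u(\cdot,t))=\tfrac32$ for every $t\in T_\infty$, proving the corollary.

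The whole argument is a short corollary of Theorem~\ref{revfracBO}, and the only step that is not entirely routine is the regularity-to-dimension lemma above, which must use \emph{both} the H\"older bound and the failure of $\mathrm H^{1/2+}$ regularity: neither alone suffices, since a continuous $\mathrm{BV}$ function outside $\bigcup_{s>1/2}\mathrm H^{s}$ may have a graph of dimension $1$ (for instance one containing a single logarithmic cusp, which is precisely the rational-time profile of part~(a)), whereas $\mathrm C^{1/2-}$ regularity alone only gives the upper bound. A more structural alternative, closer to the spirit of the paper, is to split $\widehat u(n,t)=\e^{itn|n|}\widehat{u_0}(n)$ according to the sign of $n$: on the positive and negative half-lattices the linear BO propagator agrees respectively with the linear Schr\"odinger propagator of \eqref{lsc} and with its complex conjugate, so that $u(\cdot,t)=\operatorname{Re}\big[(I+i\mathcal H)v(\cdot,t)\big]$ for $v$ the solution of \eqref{lsc} with $v_0=u_0$ (consistent with part~(a)); Rodnianski's theorem then gives $\overline{\dim}_{\mathrm M}\Gamma(v(\cdot,t))=\tfrac32$ for a.e.\ $t$, and what remains is to check that applying $\mathcal H$ and taking the real part does not collapse the dimension -- which is again an oscillation estimate of the type above, and I expect it, rather than any of the regularity bookkeeping, to be the only place genuinely requiring care.
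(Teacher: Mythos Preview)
Your proof is correct and follows the paper's strategy: the upper bound from part~(b) via the H\"older-to-dimension estimate (the paper cites Falconer), and the lower bound from parts~(b) and~(c) combined through a regularity-to-dimension lemma. The only difference is packaging: where you prove that lemma by hand (box count $\to$ oscillation sum $\to L^1$ modulus, interpolate against the $\mathrm C^{1/2-}$ bound, Parseval), the paper expresses the same interpolation as the Besov inclusion $\mathrm B^{r_1}_{1,\infty}\cap \mathrm B^{r_2}_{\infty,\infty}\subset \mathrm H^r$ for $r<\tfrac{r_1+r_2}{2}$, deduces $u(\cdot,t)\notin \mathrm B^{r_1}_{1,\infty}$ for $r_1>\tfrac12$, and then invokes the Deliu--Jawerth theorem for the $\mathrm B^{s}_{1,\infty}$-to-dimension step---so your argument is essentially an elementary unpacking of those two citations.
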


We give the proofs of Theorem~\ref{revfracBO} and Corollary~\ref{lbodim} in the next section. A crucial role  is played by the identity
\[
u(x,t)=\operatorname{Re}\left[(I+i{\mathcal H})v(x,t)\right],
\]
relating the solutions of \eqref{lBO} and \eqref{lsc} that start from the same initial condition,  $v_0=u_0$. 
This identity  is a consequence of the fact  that the action of $\mathcal{H}$ preserves the
eigenfunctions of the differential operator.

In the final section we give an illustration, by means of the numerical approximations of canonical examples,  of the significance of the statement (a) and of the corollary.  


\section{Proof of the main results}
The periodic Hilbert transform $\mathcal H:\mathrm{L}^2(\mathbb{T})\longrightarrow \mathrm{L}^2(\mathbb{T})$ is the singular integral operator defined by the principal value
\be\label{HilbOp}
   \mathcal{H}u(x)=\frac{1}{2\pi}\ \operatorname{p.v.}\!\!\int_{-\pi}^{\pi} \operatorname{cot} \frac{x-y}{2} u(y)\,\mathrm{d}y.
\ee
Let $e_n(x)=\frac{1}{\sqrt{2\pi}}\mathrm{e}^{i n x}$. Then, $\{e_n\}_{n\in\mathbb{Z}}\subset \mathrm{L}^2(\mathbb{T})$ is an orthonormal basis of eigenfunctions for both $\mathcal{H}$ and the Laplacian, $-\partial^2:\mathrm{H}^2(\mathbb{T})\longrightarrow \mathrm{L}^2(\mathbb{T})$. 
Indeed, for all $n\in\mathbb{Z}$,
\be\label{Hsp}
 -\partial^2 e_n=n^2e_n;\qquad   \mathcal{H} e_n=-i\operatorname{sgn}(n) e_n.
\ee
Here and everywhere below, we write the Fourier coefficients of $f\in\mathrm{L}^2(\mathbb{T})$, with one of the usual scalings on $\mathbb{T}=(-\pi,\pi]$, as
\[\hat{f}(n)=\frac 1 {\sqrt{2\pi}}\langle f,e_n\rangle= \frac {1}{2\pi }\int_{-\pi}^{\pi} \e^{-iny}f(y)\,\mathrm{d}y.\]
Thus, we have
\begin{align*}
    &
{\mathcal H}u(x)=i\sum_{n=1}^\infty[ \hat u(-n)\e^{-inx}-\hat u(n)\e^{inx}],
\\
&{\mathcal H}\partial^2u(x)=i\sum_{n=1}^\infty n^2[\hat u(n)\e^{inx}-\hat u(-n)\e^{-inx}].
\end{align*}

This implies that, for any $u_0\in \mathrm{L}^2(\mathbb{T})$, the solution to \eqref{lBO} is given by the expression
\begin{align}\label{solution}
u(x,t)&=\sum_{n=-\infty}^\infty\e^{inx}\e^{in|n|t}\widehat{u_0}(n)\nonumber\\
&= \widehat{u_0}(0)+\sum_{n=1}^\infty[\e^{inx}\e^{in^2t}\widehat{u_0}(n)+\e^{-inx}\e^{-in^2t}\widehat{u_0}(-n)].
\end{align}
Since $u_0$ is real-valued, $\widehat{u_0}(-n)=\overline{\widehat{u_0}(n)}$ and so
$$
\e^{-inx}\e^{-in^2t}\widehat{u_0}(-n)=\overline{\e^{inx}\e^{in^2t}\widehat{u_0}(n)}.
$$
Hence, $u$ is also real-valued and
\begin{equation} \label{soltoBO}
u(x,t)=\widehat{u_0}(0)+2\Re\left[\sum_{n=1}^\infty\e^{inx}\e^{in^2t}\widehat{u_0}(n)\right].
\end{equation}
This representation provides the link between the solutions of \eqref{lBO} and \eqref{lsc}, as stated in the next proposition. 

\begin{prop}\label{uvmainrel}
Let $u_0\in \mathrm{L}^2(\T)$ be real-valued and let $v_0=u_0$. Then, the solutions to \eqref{lBO} and \eqref{lsc} are such that,
 \be\label{uvrel}
u(x,t)=\operatorname{Re}\left[(I+i\mathcal{H})v(x,t)\right].
\ee   
\end{prop}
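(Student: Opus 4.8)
The plan is to compare the Fourier-series representations of the two solutions, exploiting the fact recorded in \eqref{Hsp} that the exponentials $e_n$ are simultaneous eigenfunctions of $\mathcal{H}$ and of $-\partial^2$. First I would write the solution to \eqref{lsc} in the same manner in which \eqref{solution} was derived: since $-i\partial^2 e_n = i n^2 e_n$, the solution to \eqref{lsc} with $v_0=u_0$ is
\[
v(x,t)=\sum_{n=-\infty}^{\infty}\e^{inx}\e^{in^2t}\widehat{u_0}(n),
\]
the series converging in $\mathrm{L}^2(\T)$ for each fixed $t$ because $|\e^{in^2t}|=1$, so that $v(\cdot,t)\in\mathrm{L}^2(\T)$ by Parseval.

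Next I would compute the action of $I+i\mathcal{H}$ on each mode. From $\mathcal{H}e_n=-i\operatorname{sgn}(n)e_n$ one gets $(I+i\mathcal{H})e_n=(1+\operatorname{sgn}(n))e_n$, hence $(I+i\mathcal{H})e_n=2e_n$ for $n\geq 1$, $(I+i\mathcal{H})e_0=e_0$, and $(I+i\mathcal{H})e_n=0$ for $n\leq -1$. Since $I+i\mathcal{H}$ is bounded on $\mathrm{L}^2(\T)$, it commutes with the $\mathrm{L}^2$-convergent sum defining $v(\cdot,t)$, so
\[
(I+i\mathcal{H})v(x,t)=\widehat{u_0}(0)+2\sum_{n=1}^{\infty}\e^{inx}\e^{in^2t}\widehat{u_0}(n).
\]
Taking real parts and using that $u_0$ is real-valued, so that $\widehat{u_0}(0)=\frac{1}{2\pi}\int_{-\pi}^{\pi}u_0\in\R$, gives
\[
\operatorname{Re}\big[(I+i\mathcal{H})v(x,t)\big]=\widehat{u_0}(0)+2\,\operatorname{Re}\!\left[\sum_{n=1}^{\infty}\e^{inx}\e^{in^2t}\widehat{u_0}(n)\right],
\]
and the right-hand side is precisely the expression \eqref{soltoBO} for $u(x,t)$. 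This establishes \eqref{uvrel} as an identity in $\mathrm{L}^2(\T)$ for every $t\in\R$.

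There is no genuine obstacle in this argument; the only points that need a little care are that $I+i\mathcal{H}$ annihilates the strictly negative Fourier modes, so $(I+i\mathcal{H})v$ is genuinely complex-valued and the outer $\operatorname{Re}$ is what restores the real solution $u$; that the zeroth mode acquires the factor $1$ rather than $2$ (because $\operatorname{sgn}(0)=0$), which is exactly what is needed to match the $\widehat{u_0}(0)$ term in \eqref{soltoBO}; and that the termwise application of $\mathcal{H}$ and of $\operatorname{Re}$ is legitimate because the series involved converge in $\mathrm{L}^2(\T)$ and $\mathcal{H}$ is bounded there. An essentially equivalent route would be to argue at the level of the flows, noting that $\e^{t\mathcal{H}\partial^2}e_n=\e^{in|n|t}e_n$ coincides with the Schr\"odinger evolution $\e^{-it\partial^2}e_n=\e^{in^2t}e_n$ on the nonnegative modes and with its complex conjugate on the negative ones, and then invoking the conjugate symmetry $\widehat{u_0}(-n)=\overline{\widehat{u_0}(n)}$; but the projection computation above seems the most transparent.
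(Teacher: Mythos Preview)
Your proof is correct and follows essentially the same approach as the paper: both arguments expand $v$ in Fourier series, use the eigenvalue relation $\mathcal{H}e_n=-i\operatorname{sgn}(n)e_n$ to isolate the positive-frequency part, and then match against \eqref{soltoBO}. Your version is slightly more direct in that you compute $(I+i\mathcal{H})v$ modewise and take the real part, whereas the paper first solves for $\sum_{n\geq 1}\e^{inx}\e^{in^2t}\widehat{u_0}(n)$ in terms of $v$ and $\mathcal{H}v$ and then substitutes, but these are algebraically equivalent manipulations.
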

\begin{proof}
Since $\overline{\widehat{u_0}(n)}=\widehat{u_0}(-n)$, the solution to \eqref{lsc} with $v_0=u_0$ is given by    
\[
 v(x,t)= \widehat{u_0}(0)+\sum_{n=1}^\infty [\e^{inx}\e^{in^2t}\widehat{u_0}(n)+\e^{-inx}\e^{in^2t}\overline{\widehat{u_0}(n)}].
\]
Then, using \eqref{Hsp}, we have
$$
\sum_{n=1}^\infty\e^{inx}\e^{in^2t}\widehat{u_0}(n)=\frac{v(x,t)-\widehat{u_0}(0)+i{\mathcal H}v(x,t)}{2}.$$
Replacing the sum of this expression and of its conjugate into equation \eqref{soltoBO} gives the relation \eqref{uvrel}.\end{proof}

For later purposes, note that the expression \eqref{uvrel} can be formulated in operator form as
\begin{equation}
 \mathrm{e}^{\mathcal{H}\partial^2 t}u_0=\widehat{u_0}(0)+2\operatorname{Re}\left(\mathrm{e}^{-i\partial^2 t}\Pi u_0- \widehat{u_0}(0)\right)=2\operatorname{Re}\left(\Pi\mathrm{e}^{-i\partial^2 t} u_0\right)- \widehat{u_0}(0),
  \label{withSzego}
 \end{equation}
where $\Pi f=\sum_{n=0}^\infty \hat{f}(n)\mathrm{e}^{inx}$ is the Szeg\"o projector. Indeed, the latter commutes with both $\mathcal{H}$ and  $-\partial^2$. 

\medskip

Proposition~\ref{uvmainrel} is the main ingredient in the proof of Theorem~\ref{revfracBO}, combined with the analogous statements for \eqref{lsc}, which are given in a series of papers (see e.g. \cite{kapitanski1999does,rodnianski2000fractal,chousionis2014fractal}). 

\medskip

We consider the proof of the three statement (a)-(c) separately. The proof of (a) and the proof of (c) follow immediately from \eqref{uvrel}.

\begin{proof}[Proof of Theorem~\ref{revfracBO}-(a)]
Let $v$ be the solution to \eqref{lsc} with $v_0=u_0\in\mathrm{L}^2(\T)$. Then, as shown e.g. in \cite{erdougan2016dispersive,taylor2003schrodinger}, for any $p,\,q\in\mathbb{Z}$ co-prime, the solution has the following representation at rational times $t=2\pi\frac pq$:
\[
     v\Big(x,2\pi\frac{p}{q}\Big)= \frac{1}{q}
      \sum_{k,m=0}^{q-1} \mathrm{e}^{2\pi i \frac{km}{q}}\e^{2\pi i \frac{p}{q}m^2} u_0\Big(x-2\pi\frac{k}{q}\Big).
\]
Substitution into \eqref{uvrel} gives \eqref{bosum}.
\end{proof}

\begin{proof}[Proof of Theorem~\ref{revfracBO}-(c)]
By hypothesis, $u_0\not \in \mathrm{H}^{r_0}(\mathbb{T})$ for some $r_0\in[\frac12,1)$. Since $u_0$ is real-valued, then $u_0=\Pi u_0+\overline{\Pi u_0}-\widehat{u_0}(0)$. Hence, $\Pi u_0\not \in \mathrm{H}^{r_0}(\mathbb{T})$. Thus, according to \cite[Lemma~3.2]{chousionis2014fractal} (or \cite[Theorem~III]{kapitanski1999does}), for almost all $t\in\mathbb{R}$, \[\operatorname{Re}\left(\mathrm{e}^{-i\partial^2 t} \Pi u_0\right)\not\in \bigcup_{r>r_0}\mathrm{H}^r(\mathbb{T}).\] Hence, by virtue of \eqref{withSzego}, we have 
\[
               u(\cdot,t)+\widehat{u_0}(0)= 2\operatorname{Re}\left(\mathrm{e}^{-i\partial^2 t} \Pi u_0\right)\not\in \bigcup_{r>r_0}\mathrm{H}^r(\mathbb{T}).
\]
\end{proof}

We now turn to the proof of Theorem~\ref{revfracBO}-(b). In this case, we cannot use  \eqref{uvrel} or \eqref{withSzego} to confirm the validity of the statement directly, because $u_0\in \mathrm{BV}(\mathbb{T})$ does not imply  $\Pi u_0\in \mathrm{BV}(\mathbb{T})$: indeed, $\operatorname{Im}(\Pi u_0)=\frac12{\mathcal H} u_0$ is unbounded as soon as $u_0$ has a jump discontinuity.
We adapt instead the ideas given in \cite{kapitanski1999does} and \cite{rodnianski2000fractal} for the analysis of the boundary-value problem \eqref{lBO}. 

For $\alpha\in\mathbb{R}$, the Besov spaces of order $\alpha$, denoted $\mathrm{B}_{p,\infty}^\alpha(\T)$, are defined as follows. 

Let $\chi:\mathbb{R}\longrightarrow [0,1]$ be a $\mathrm{C}^{\infty}$ function, such that 
\[\operatorname{supp} \chi=[2^{-1},2],\qquad \sum_{j=0}^\infty \chi (2^{-j}\xi)=1 \;\; \forall \xi\geq 1.\] 
Define $\chi_j$ by
\[
\chi_j(\xi)=\chi(2^{-j}\xi),\;\;j\in\mathbb{N}, \qquad \chi_0(\xi)=1-\sum_{j=1}^\infty \chi_j(\xi).
\]
The (Littlewood-Paley) projections of $f(x)=\sum_{n\in\mathbb{Z}}\hat{f}(n)\e^{inx}$,  function or distribution on $\mathbb{T}$, are given by
\[
       (K_jf)(x)=\sum_{n\in\mathbb{Z}}\chi_j(|n|)\hat{f}(n)\e^{inx}.
\]
Then $f\in \mathrm{B}_{p,\infty}^\alpha(\T)$ if and only if
\[
 \sup_{j=0,1,\ldots}
 2^{\alpha j}\|K_jf\|_{\mathrm{L}^{p}(\T)}<\infty.
\] 

We take $p=1$ in the proof of Corollary~\ref{lbodim} at the end of this section, but otherwise we will be concerned exclusively with the case $p=\infty$.

Below we use the following two properties of $\mathrm{B}_{\infty,\infty}^\alpha(\T)$:
 \begin{equation}\label{uuprime}
  f'\in  \mathrm{B}_{\infty,\infty}^\alpha(\T) \Longleftrightarrow f\in \mathrm{B}_{\infty,\infty}^{\alpha+1}(\T),\quad \forall \alpha\in\R;
 \end{equation}
\begin{equation}  \label{beho}
\mathrm{B}_{\infty,\infty}^\alpha(\T)=\mathrm{C}^\alpha(\T),\qquad \forall \alpha\in(0,1).
\end{equation}
The proof of these two statements is included in the appendix.
We will also make use of the next lemma, which is a consequence of \cite[Corollary~2.4]{kapitanski1999does}.

\begin{lemma}\label{goodt}
There exists a set $\mathcal K\subset \R$, whose complement $\mathcal K^c$ has measure zero but is dense in $\mathbb R$, such that the following holds true for all $t\in \mathcal{K}$. Given $\delta>0$, there exists a constant $C>0$ such that 
\begin{equation} \label{bareconditionq}
    \sup_{x\in \mathbb{T}}\left|\sum_{n=0}^{\infty}\chi_j(n)\e^{in^2t+inx}\right|\leq C 2^{\frac{j}{2}(1+\delta)},
\end{equation}
for all $j=0,1,\ldots$.
\end{lemma}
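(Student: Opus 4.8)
The plan is to reduce the exponential sum estimate \eqref{bareconditionq} to a known Weyl-type bound for the Schrödinger semigroup, which is the content of \cite[Corollary~2.4]{kapitanski1999does}. That result, in the formulation used by Rodnianski and others, asserts the existence of a full-measure set $\mathcal K$ such that for $t\in\mathcal K$ and every $\delta>0$ the localized exponential sums $\sum_{|n|\sim 2^j} e^{in^2 t + inx}$ are $O(2^{j(1+\delta)/2})$ uniformly in $x$. So the main task is bookkeeping: first I would recall the precise statement from \cite{kapitanski1999does} (a Diophantine condition on $t/2\pi$ of the form $|t/2\pi - p/q|\le q^{-2-\varepsilon}$ holds for only finitely many $p/q$ for a.e.\ $t$, by Borel--Cantelli; on the complement this forces the van der Corput / Weyl sum bound), and then observe that the one-sided sum $\sum_{n=0}^\infty \chi_j(n) e^{in^2 t + inx}$ in \eqref{bareconditionq} differs from the two-sided Littlewood--Paley sum only by the negative-frequency half, which by the substitution $n\mapsto -n$ and conjugation is controlled by the same estimate with $t$ replaced by $-t$ and $x$ by $-x$.

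Concretely, the steps are: \textbf{(i)} State the set $\mathcal K$ via the Diophantine condition, noting $\mathcal K^c$ is a countable union of shrinking neighborhoods of rationals, hence null and dense. \textbf{(ii)} For $t\in\mathcal K$ and fixed $\delta>0$, pick the rational approximation $|t/2\pi - p/q|\le 1/q^2$ from Dirichlet's theorem with $q$ in a suitable dyadic range relative to $2^j$; apply the standard Weyl bound (see e.g.\ the derivation in \cite{kapitanski1999does} or \cite{rodnianski2000fractal}) to get $\bigl|\sum_{n\sim 2^j} e^{in^2t+inx}\bigr| \lesssim 2^{j/2}(2^j/q + 1)^{1/2}(q)^{1/2}\cdot(\log)$-type factors, and then use the Diophantine condition on $t$ (valid for all but finitely many $q$, hence with a $t$-dependent constant) to conclude $q$ cannot be too large or too small, yielding the clean bound $C_{t,\delta} 2^{j(1+\delta)/2}$. \textbf{(iii)} Handle $j=0$ trivially since $\chi_0$ is supported near the origin and the sum is a fixed finite sum bounded by a constant. \textbf{(iv)} Split $\sum_{n=0}^\infty = \sum_{n\in\mathbb Z} - \sum_{n<0}$ (or directly restrict, since $\chi_j(n)$ already localizes $n\sim 2^j>0$ for $j\ge1$), and reduce the genuinely one-sided piece to the symmetric Littlewood--Paley sum plus its reflection, both estimated in step (ii).

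The main obstacle — or rather the only real subtlety — is that \cite[Corollary~2.4]{kapitanski1999does} is typically stated for the full exponential sum over a dyadic block $n\in[2^{j-1},2^{j+1}]$, or as an $L^\infty_x$ bound on $e^{-i\partial^2 t}$ applied to a Littlewood--Paley piece, whereas \eqref{bareconditionq} has the smooth cutoff $\chi_j$ and the restriction $n\ge 0$. Neither is a serious issue: the smooth cutoff can be absorbed by summation by parts (Abel summation), since $\chi_j$ has bounded variation $O(1)$ uniformly in $j$, converting the smoothly-weighted sum into a $O(1)$-weighted average of sharp partial sums, each of which obeys the Weyl bound; and the positivity restriction is dealt with by the reflection trick above, using that $\mathcal K$ is symmetric (if $t\in\mathcal K$ then $-t\in\mathcal K$, since the Diophantine condition is even in $t$). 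I would also remark that the constant $C$ genuinely depends on both $t$ and $\delta$, but not on $j$ or $x$, which is all that is needed for the Besov-space argument that follows.
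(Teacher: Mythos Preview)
Your proposal is correct and follows the same overall architecture as the paper---Weyl-type bound on quadratic exponential sums, Abel summation to insert the smooth cutoff~$\chi_j$, and a metric number-theoretic input to guarantee a good rational approximation with denominator $q\sim 2^j$---but the number-theoretic ingredient is genuinely different. You construct $\mathcal K$ via the Diophantine condition $|t/2\pi - p/q|\gtrsim q^{-2-\varepsilon}$ (full measure by Borel--Cantelli), which together with Dirichlet's box principle squeezes the approximating denominator into $[2^{j/(1+\varepsilon)},2^j]$; the paper instead invokes the Khinchin--L\'evy theorem on the almost-sure growth rate $\lim_n (\log q_n)/n=\pi^2/(12\log 2)$ of continued-fraction denominators, and picks the convergent $q_{n(j)}=2^{j(1+r_j)}$ with $r_j\to 0$ directly. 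Your route is arguably more elementary and is the one taken in \cite{rodnianski2000fractal}; the paper's route gives a sharper localisation of $q$ but requires the deeper limit theorem. Both feed into the same estimate \eqref{cor2.4} (which the paper derives explicitly from \cite[Lemma~4]{FJK1977} via summation by parts, exactly as you outline). One minor remark: your step~(iv) and the reflection discussion are unnecessary, since for $j\ge 1$ the cutoff $\chi_j$ is already supported on $[2^{j-1},2^{j+1}]\subset(0,\infty)$, so the sum in \eqref{bareconditionq} is automatically one-sided---you note this yourself, so the extra discussion can simply be dropped.
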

\begin{proof}
According to Dirichlet's Theorem, for every irrational number $a>0$ there are infinitely many positive integers $p,\,q\in\mathbb{N}$, such that $p$ and $q$ are co-prime, and
\begin{equation} \label{Dirichlet}
  \left| a - \frac{p}{q}\right|\leq \frac{1}{q^2}.
\end{equation}
By virtue of \cite[Lemma~4]{FJK1977}, there exists a constant $c_1>0$ such that, if the irreducible fraction $\frac{p}{q}$ satisfies \eqref{Dirichlet}, then
\begin{align*}
    \left|\sum_{n=M}^N \e^{2\pi i(an^2+bn)}\right| &=\left|\sum_{k=1}^{N-M} \e^{2\pi i(ak^2+bk)}\right|\\
&\leq c_1\left(\frac{N-M}{\sqrt{q}}+\sqrt{q}\right) \\
\end{align*}
for all $N\in\mathbb{N}$, $0< M<N$ and $b\in\mathbb{R}$. Here $c_1$ is independent of $a$ and $b$. Take any sequence $\{\omega_n\}$, such that $\omega_n=0$ for $n<M$ or $n>N$, and
\[
   \sum_{n=M}^N |\omega_{n+1}-\omega_n|\leq d.
\]
Since,
\begin{align*}
\left|\sum_{n=M}^N\omega_n\e^{2\pi i(an^2+bn)}\right|&=     \left| \sum_{n=M}^{N} (\omega_{n+1}-\omega_n)\sum_{k=M}^n\e^{2\pi i(ak^2+bk)}\right|\\
&\leq \sum_{n=M}^N|\omega_{n+1}-\omega_n|\left|\sum_{k=M}^n\e^{2\pi i(ak^2+bk)} \right|\\
&\leq d\sup_{n=M,\ldots,N}\left|\sum_{k=M}^n\e^{2\pi i(ak^2+bk)} \right|,
\end{align*}
then, 
\begin{equation} \label{cor2.4}
\left|\sum_{n=M}^N\omega_n \e^{2\pi i(an^2+bn)}\right|\leq dc_1 \left(\frac{N-M}{\sqrt{q}}+\sqrt{q}\right).
\end{equation}
This is \cite[Corollary~2.4]{kapitanski1999does}. 

Let $[a_0,a_1,\ldots]$ be the continued fraction expansion of the irrational number $a$,
\[
     a=a_0+\frac{1}{a_1+\frac{1}{a_2+\frac{1}{\cdots}}}.
\]
Then, the irreducible fractions, 
\[
     \frac{p_n}{q_n}=a_0+\frac{1}{a_1+\frac{1}{a_2+\cdots \frac{1}{a_{n-1}+\frac{1}{a_n}}}},
\]
are such that \eqref{Dirichlet} holds true with $\{p_n\}$ and $\{q_n\}$ increasing sequences. According to the Khinchin-L{\'e}vy Theorem, for almost every $a>0$ the denominators $q_n$ satisfy \cite[p.66]{Khinchin}
\[
     \lim_{n\to \infty} \frac{\log q_n}{n}=\rho,\qquad \rho=\frac{\pi^2}{12 \log 2}.
\]
If $a$ is such that this limit exists, then for all $j\in \mathbb{N}$ sufficiently large we can find quotients  $ \frac{p_n(j)}{q_n(j)}$  with denominators satisfying
$
     q_{n(j)}=2^{j(1+r_j)},
$
where $r_j\to 0$ as $j\to \infty$. Indeed, we can take $n(j)$ equal to the integer part of $j(\log 2)/\rho$\footnote{Note that $\{q_{n(j)}\}$ is not a subsequence of $\{q_n\}$, since $\log2/\rho<1$ and therefore indices may be repeated.}. This choice implies
$\displaystyle{
   \lim_{j\to \infty}\frac{\log q_{n(j)}}{j}=\log 2.
}$

Let $\mathcal{K}$ be the  set of positive times of the form $t=2\pi a$ such that the sequence of quotients of $a$ satisfies the conditions of the previous paragraph.  Let $t\in  \mathcal{K}$ and fix $\delta>0$. 
Let $J>0$ be such that $|r_j|<\delta$ for all $j\geq J$.
Taking $M=2^{j-1}$, $N=2^{j+1}$, $\omega_n=\chi_j(n)$, and
\[
     d=2\sup_{\xi\in \R}|\chi'(\xi)|,
\]
in \eqref{cor2.4}, yields
\begin{align*}
     \sup_{x\in \T} \left| \sum_{n=0}^{\infty}\chi_{j}(n)\e^{in^2t+inx}\right|&=\sup_{x\in \T} \left| \sum_{n=2^{j-1}}^{2^{j+1}}\chi_{j}(n)\e^{in^2t+inx}\right|\\ 
&\leq dc_1\left(\frac{2^{j+1}-2^{j-1}}{\sqrt{q_{n_j}}}+\sqrt{q_{n_j}}\right)\\
&\leq dc_1 \left(\frac{2^{j-1}3}{2^{\frac{j}{2}(1-\delta)}}+2^{\frac{j}{2}(1+\delta)}\right)
\\ &\leq c_2 2^{\frac{j}{2}(1+\delta)},
\end{align*}
for all $j\geq J$.
This implies \eqref{bareconditionq} for sufficiently large $C>0$.
\end{proof}

\begin{proof}[Proof of Theorem~\ref{revfracBO}-(b)]
Let $u_0\in \operatorname{BV}(\T)$. 
Define the periodic distribution,
\[
       E_t(x)=\sum_{n=1}^{\infty}\left[\e^{inx+in^2t}+\e^{-inx-in^2t}\right].
\]
Since the Fourier coefficients $\e^{i|n|nt}$ of $E_t$ are unimodular,  the series converges in the weak sense of distributions and determines $E_t$ uniquely for all $t\in \R$, \cite[Theorems 11.6-1 and 11.6-2]{Zemanian1965}.
Moreover, for all $t\in\mathbb{R}$, $E_t\in \mathrm{B}^{\beta}_{\infty, \infty}(\T)$ for all $\beta<-1$. 

Let $t\in \mathcal{K}$, with $\mathcal{K}\subset\mathbb{R}$ as in Lemma~\ref{goodt}. Then it follows from \eqref{bareconditionq} that in fact the stronger inclusion $E_t\in\mathrm{B}_{\infty,\infty}^\beta(\T)$ for all $\beta<-\frac12$.  Define the periodic distribution $H_t$ by $H'_t=E_t$, namely 
$$
H(t)=\sum_{n=1}^{\infty}\frac{\e^{inx+in^2t}-\e^{-inx-in^2t}}{in}=\sum_{n\neq 0,\,n=-\infty}^\infty\frac{\e^{inx+in|n|t}}{in}.
$$ 
Then, according to \eqref{uuprime} and \eqref{beho},  $H_t\in\mathrm{C}^{\beta+1}(\T)$ for all $\beta<-\frac12$. 

Note that
$$
\widehat {u_0}(n)=\frac{1}{2\pi in} \int_{\mathbb T} \e^{-iny}\,\mathrm{d}u_0(y)=\frac{\widehat{\mu_0}(n)}{in},\quad n\neq 0,
$$
where $\mu_0$ is the Lebesgue-Stieltjes measure associated to $u_0$, which satisfies $|\mu_0|(\T)<\infty$. 
Then the solution of \eqref{lBO}, given by \eqref{solution},  can be expressed in terms of $H_t$ as follows:
\begin{align*}
 u(x,t)&= \widehat {u_0}(0)+\sum_{n\neq 0,\,n=-\infty}^\infty \e^{i|n|n t}\widehat{u_0}(n)\e^{inx} \\ 
&= \widehat {u_0}(0)+\sum_{n\neq 0,\,n=-\infty}^\infty \frac{\e^{i|n|n t}\widehat{\mu_0}(n)}{in}\e^{inx} \\
&= \widehat {u_0}(0)+(H_t * \mu_0)(x).
\end{align*}
Hence, since $\mu_0$ is a bounded measure, we indeed have $u(\cdot,t)\in \mathrm{C}^{\alpha}(\T)$ for all $\alpha<\frac12$. 
\end{proof}
We conclude this section giving the proof of Corollary~\ref{lbodim}.

\begin{proof}[Proof of Corollary~\ref{lbodim}]
Let $D$ denote the upper Minkowski dimension of the graph of $u(\cdot,t)$. By virtue of Theorem~\ref{revfracBO}-(b), it follows that $D\leq \frac32$ for almost all $t\in\mathbb{R}$, see \cite[Corollary~11.2]{Falconer1990}. Moreover, since \[\mathrm{B}^{r_1}_{1,\infty}(\mathbb{T})\cap \mathrm{B}^{r_2}_{\infty,\infty}(\mathbb{T})\subset \mathrm{H}^r(\mathbb{T})\]for all $r<\frac{r_1+r_2}{2}$, then $u(\cdot,t)\not\in \mathrm{B}^{r_1}_{1,\infty}(\mathbb{T})$ whenever $r_1>\frac12$, for those $t$ for which the conclusions of Theorem~\ref{revfracBO}-(b) and (c) hold. Thus, by virtue of \cite[Theorem~4.2]{DeliuJawerth1992}, we also have the complementary bound $D\geq \frac32$ for all such $t$. This ensures the claim made in the corollary. 
\end{proof}

\section{Illustration of the main results}\label{s3}

In this final section we examine the claims of Theorem~\ref{revfracBO} and Corollary~\ref{lbodim}. We illustrate their meaning for the case that the initial condition is a step function and for specific values of the time variable.

We first consider how Theorem~\ref{revfracBO}-(a) implies the cusp revival phenomenon.
Assume that $u_0(x)=\mathbbm{1}_{[-\frac{\pi}{2},\frac{\pi}{2}]}(x)$. Then, according to the statement of the theorem, for $t\in 2\pi\mathbb{Q}$ the solution of \eqref{lBO} is the summation of two functions; a finite linear combination of characteristic functions (a simple function) and its Hilbert transform. Since the periodic Hilbert transform of the characteristic function of a single interval can be computed explicitly as  \begin{equation} \label{logsing}
    \mathcal{H} \mathbbm{1}_{[a,b]}(x)=\frac{1}{\pi}\log\left|\frac{\sin\Big(\frac{x-a}{2}\Big)}{\sin\Big(\frac{x-b}{2}\Big)}\right|,
\end{equation} for $a,\,b\in \mathbb{T}$ with $-\pi\leq a<b<\pi$,
it follows by linearity that the graph of the solution displays finitely many logarithmic cusps for any $t\in 2\pi\mathbb{Q}$.  This is illustrated in Figure~\ref{Fig1}.

\begin{figure}
\includegraphics[width=\textwidth]{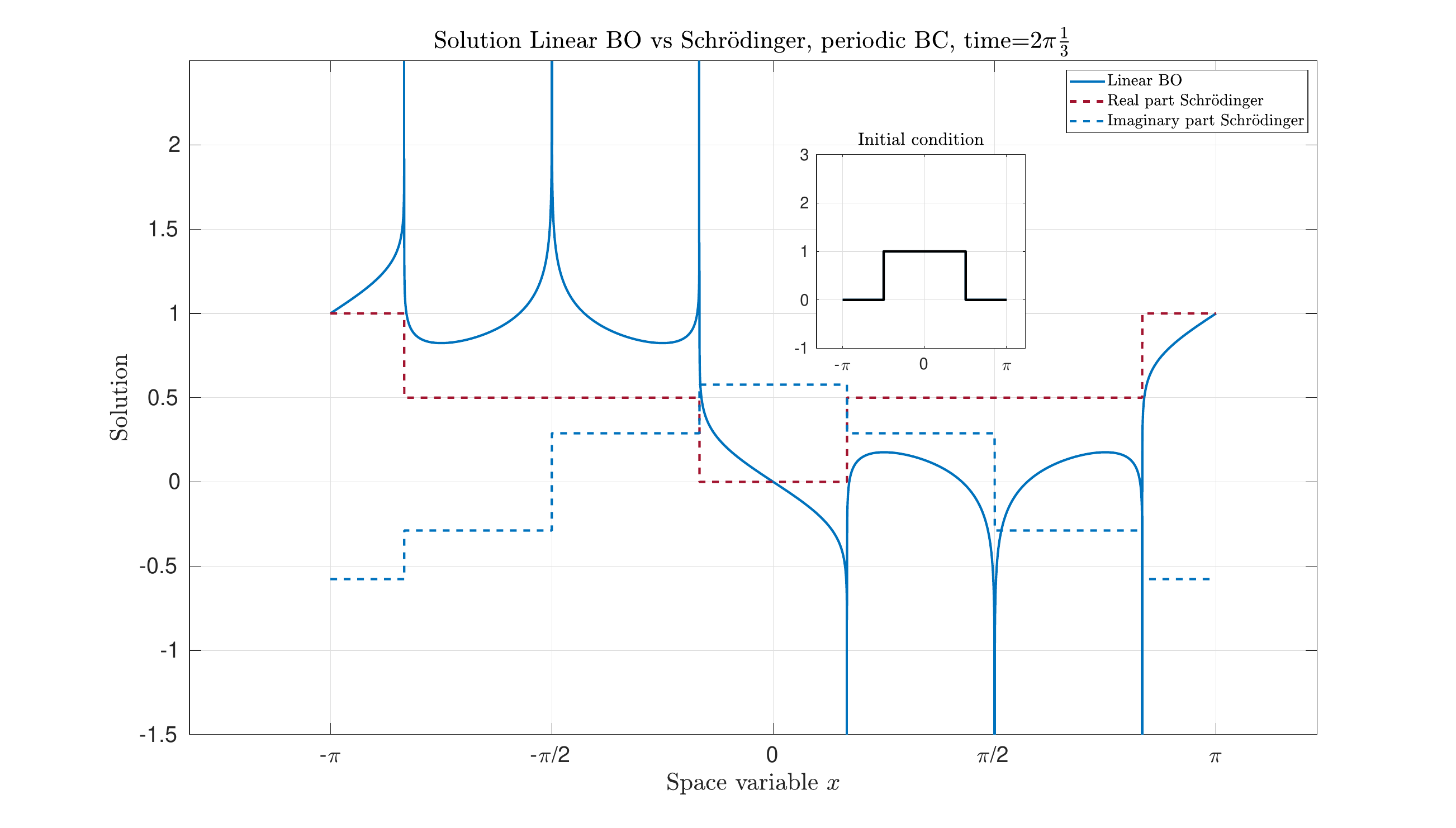}
\caption{Solution of \eqref{lBO} for $u_0(x)=\mathbbm{1}_{[-\frac{\pi}{2},\frac{\pi}{2}]}(x)$ at time $t=2\pi\frac{1}{3}$ superimposed on the real and imaginary parts of the solution of \eqref{lsc}. 
The cusp singularities in the solution of \eqref{lBO} 
correspond to jump singularities in either part of the solution of \eqref{lsc}. }
\label{Fig1}
\end{figure}

\begin{figure}
\includegraphics[width=\textwidth]{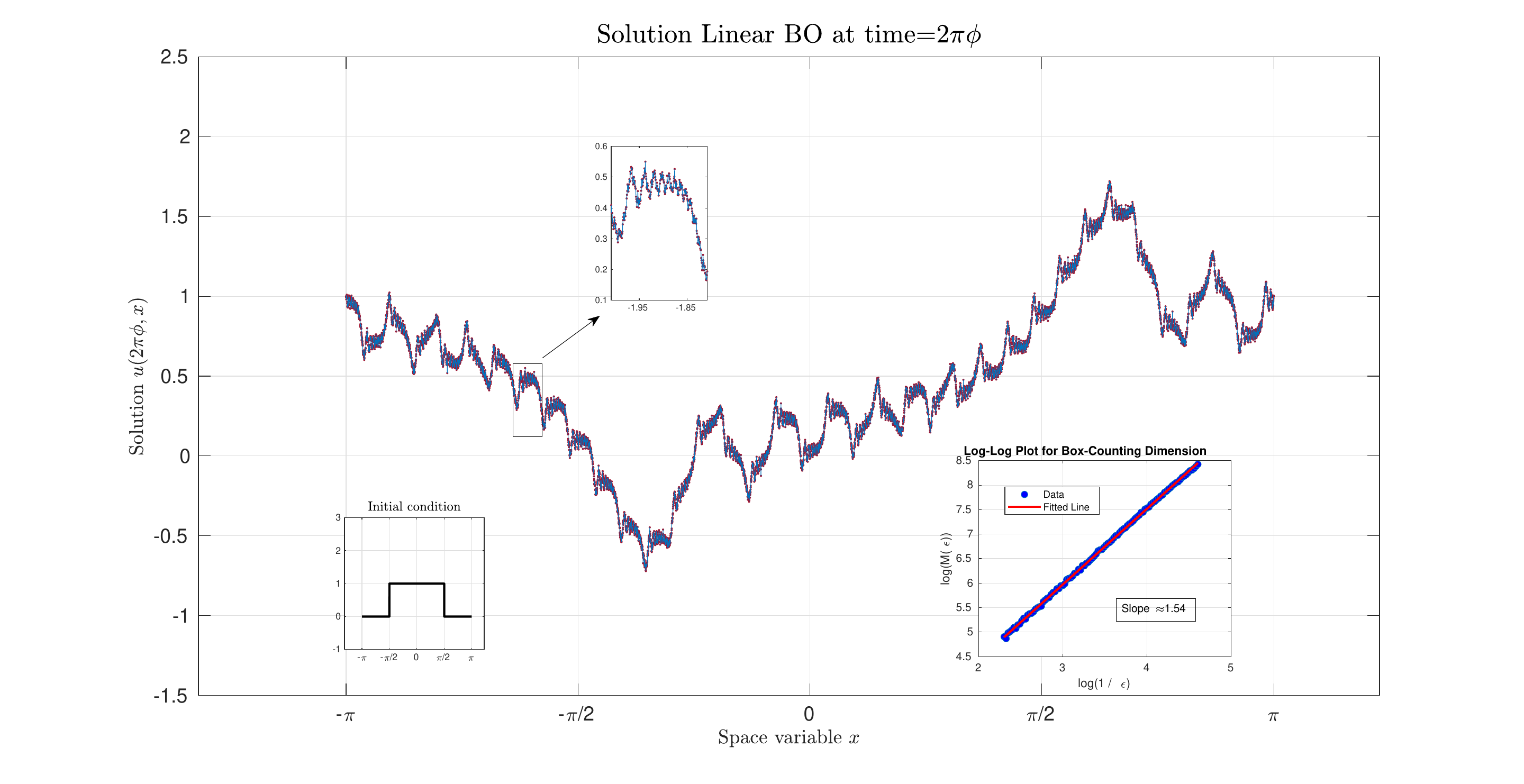}
\caption{Solution for $\frac{t}{2\pi}$ a rational approximation of $\phi\sim\frac{p}{q}$ for $p=F_{16}=2584$ and $q=F_{15}=1597$. Note that $|\phi-\frac{p}{q}|<1.7\times 10^{-6}$. The estimate of the box counting dimension is $D=1.54$. \label{figgr}}	
\end{figure}

\begin{figure}
\includegraphics[width=\textwidth]{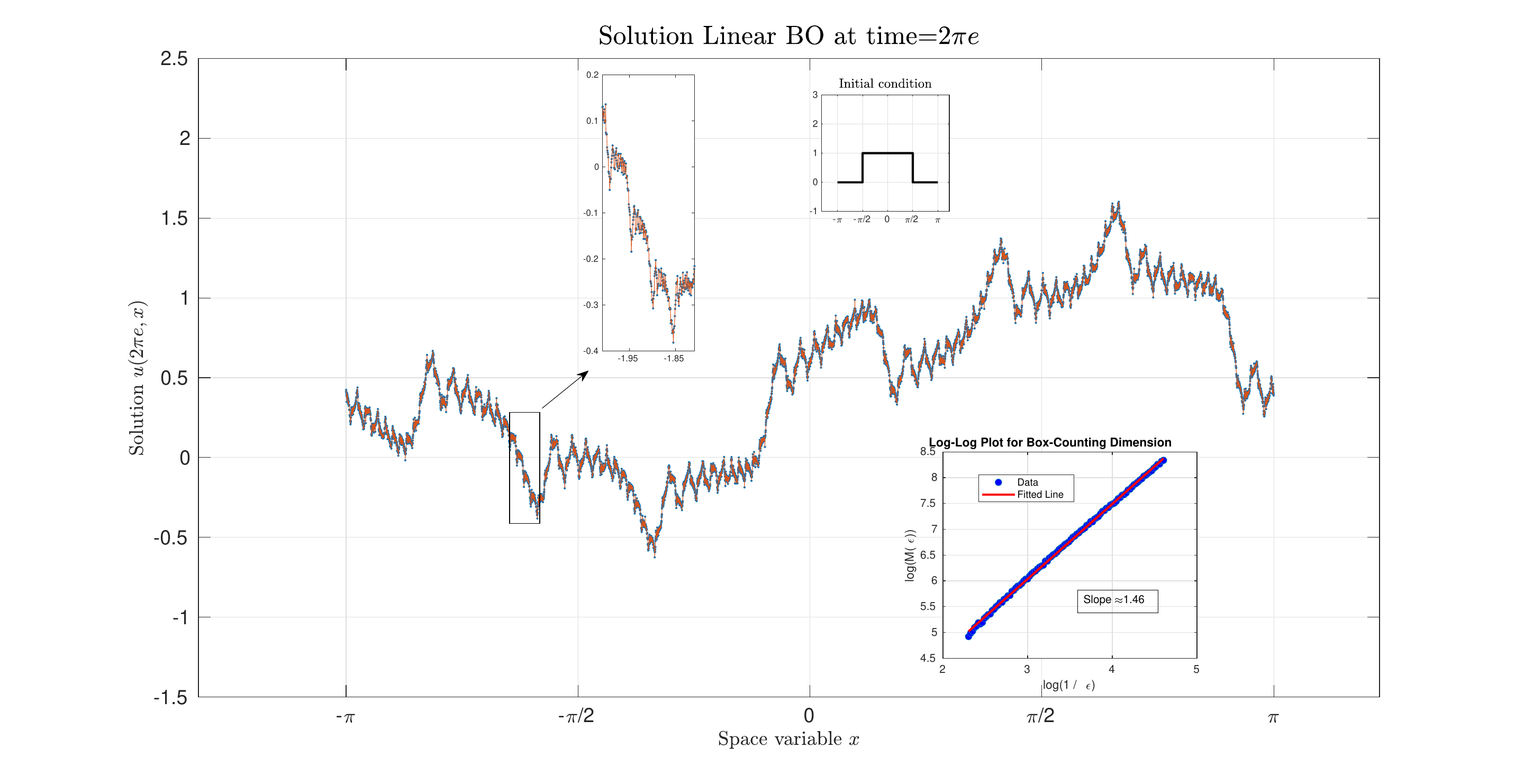}
\caption{Solution for for $\frac{t}{2\pi}$ a rational approximation of $\e\sim\frac{p}{q}$ for $p=23225$ and $q=8544$. Note that $|\e-\frac{p}{q}|<6.7\times 10^{-9}$.  The estimate of the box counting dimension is $D=1.46$.\label{figex}}
\end{figure}

\medskip

We now examine the result of Corollary~\ref{lbodim} and confirm the conclusion that the fractal dimension of the graph of the solution is equal to $\frac32$ for specific irrational times.   We consider rational approximations to two different values of the time variable:  $t=2\pi \phi$, where $\phi=\frac{1+\sqrt{5}}{2}$ is the Golden Ratio, and  $t=2\pi \e$. Both satisfy \eqref{bareconditionq}.

As the denominator $q$ in Theorem~\ref{revfracBO}-(a) increases, the number of singularities of the solution increases. In the limit as $\frac{p}{q}$ approaches almost every irrational number, Theorem~\ref{revfracBO}-(b) implies that the solution will approach a continuous function. In figures~\ref{figgr} and \ref{figex} we show an approximation of the values of the solution for 10k points uniformly distributed on $x\in(-\pi,\pi]$. Note that, for the  initial profile $u_0=\mathbbm{1}_{[-\frac{\pi}{2},\frac{\pi}{2}]}$, shown in the figures, we have \eqref{logsing} for $a=-\frac{\pi}{2}$ and $b=\frac{\pi}{2}$. We generate the values of the solution by using directly the formula \eqref{bosum} evaluated at the 10k nodes partitioning the segment $(-\pi,\pi]$.  

Embedded in figures~\ref{figgr} and \ref{figex}, we give an estimate of the box-counting dimension $D$ of each graph. This, in turns, is an upper bound for the upper Minkowski dimension. Note that both numbers are close to the value $\frac32$, namely, $D=1.54$ and $1.46$, respectively. To arrive at these prediction, we have implemented the Algorithm~\ref{algobox}, as follows. We compute the number $M(\epsilon)$ of square boxes of side $\epsilon$ covering the graph of the solution, for a range of $\epsilon$ as shown in the graphs, then interpolate the approximated box-counting dimension
\[
    D=\lim_{\epsilon \to 0}\frac{\log M(\epsilon)}{\log \frac{1}{\epsilon}}
\]  
using the slope of the linear fitting.

Both graphs point to the fractal nature of the solution. Indeed, they seem to indicate a self-similar pattern in the solution as $x$ increases and also they appear to be nowhere differentiable curves.

\begin{algorithm} \caption{Function for counting the number of boxes of side $\epsilon$ required to cover the graph interpolated by the data $A = [x, y]$, where $x$ and $y$ are vectors of size $N$.
\label{algobox}} 
\begin{algorithmic}[1] 
\Procedure{numbox}{$x,\,y,\,\epsilon$}       
\Comment{$\epsilon=$\,size of the boxes}   
\State $N=\operatorname{size}(x)$;   
\State $a=\min(x)$; 
\State $b=\max(x)$;    
\If{$\epsilon<2(b-a)/N$}    
       \textbf{break}  \Comment{Break for $<2$ pts per $x$ coord}
  \EndIf 
       \State $C=0$;    \Comment{Initialise count variable}
\State $N_{\mathrm{towers}}=\operatorname{floor}((b-a)/\epsilon)+1$;\Comment{Number of towers} 
\For{$k=1:N_{\mathrm{towers}}$} \Comment{Loop on each tower}
 \State $I=\operatorname{find}\big\{j:(k-1)\epsilon+a\leq x(j)<k\epsilon+a\big\}$;
 \State $J=\max(y(I))-\min(y(I))$;
 \State $N_{\mathrm{boxes in tower}}=\operatorname{floor}(J/\epsilon)+1$; \Comment{Count boxes in each tower}
  \State $C=C+N_{\mathrm{boxes in tower}}$; \Comment{Add to total count}
\EndFor
\State    \Return $C$ \Comment{After loop return total}
  \EndProcedure  \end{algorithmic} \end{algorithm}

\section*{Acknowledgements}
We kindly thank O.~Pocovnicu for her useful comments during the preparation of this manuscript. LB was supported by COST Action CA18232. BM has been supported by EPSRC through a MAC-MIGS summer studentship. BP was partially supported by a Leverhulme Research Fellowship.

\bibliographystyle{amsplain}

\appendix

\section{The Besov spaces $\mathrm{B}^\alpha_{\infty,\infty}(\T)$}
Consider the definition of $B^{\alpha}_{\infty, \infty}(\T)$ given above. Let $g\in\mathcal{S}(\R)$ be such that $\mathcal{F}g(\xi)=\chi(\xi)$, where 
\[
    \mathcal{F}f(\xi)=\int_{\R}f(x)\e^{-i\xi x}\,\mathrm{d}x
\]
is the Fourier transform. Then,
$
     (\mathcal{F}g_j)(\xi)=\chi_{j}(\xi)
$ for $g_j(x)=2^{j}g(2^j x)$. 

If $f\in\mathcal{S}(\R)$, Poisson's Summation Formula prescribes that,
\[
    \sum_{n\in \Z}f(x+n)=\sum_{n\in \Z}(\mathcal{F}f)(2\pi n)\e^{2\pi inx}
\]
for all $x\in \R$. Letting $\tilde{f}(x)=f(2\pi x)$, gives
\[
     (\mathcal{F}\tilde{f})(\xi)=\frac{1}{2\pi} (\mathcal{F}f)\left(\frac{\xi}{2\pi}\right).
\]
Then,
\[
     \sum_{k\in \Z}f(z+2\pi k)=\frac{1}{2\pi} \sum_{k\in \Z} (\mathcal{F}f)(k)\e^{inz}.
\]
Hence, we can represent the projections $K_j$ of any periodic distribution $F$, as
\begin{align*}
(K_jF)(x)&=\sum_{k=-\infty}^{\infty} \chi_j(|k|)\left(\frac{1}{2\pi}\int_{\T}F(y)\e^{-iky}\,\mathrm{d}y\right)\e^{ikx} \\
&=\int_{\T}\left(\frac{1}{2\pi} \sum_{k=-\infty}^{\infty} \chi_j(|k|)\e^{ik(x-y)}\right)F(y)\,\mathrm{d}y \\&=\int_{\T}\left(\sum_{k=-\infty}^{\infty} g_j(x-y+2k\pi)\right)F(y)\,\mathrm{d}y\\&=\sum_{k=-\infty}^{\infty} \int_{\T} g_j(x-y+2k\pi)F(y-2k\pi)\,\mathrm{d}y \\
&=\int_{\R}g_j(x-y)F(y)\,\mathrm{d}y=(g_j\star F)(x).
\end{align*}
for all $x\in\R$. Here the symbol ``$\star$'' denotes the convolution on $\R$.

Now, according to \cite[Lemma~2.1, p.52]{bcd} in the case $p=\infty$, there exists a constant $C>0$ which only depends on $r_1$, $r_2$ and $\lambda$, ensuring the following estimates. For any function $u\in\mathrm{L}^\infty(\R)$, such that 
\[
    \operatorname{supp} (\mathcal{F}u)\subset \lambda \{\xi\in\R\,:\, 0<r_1\leq |\xi|\leq r_2\},
\]
we have
\begin{equation}\label{Bernstein}
\frac{\lambda}{C}\|u\|_{\mathrm{L}^{\infty}(\R)}\leq \|u'\|_{\mathrm{L}^{\infty}(\R)}
\leq C\lambda \|u\|_{\mathrm{L}^{\infty}(\R)}.
\end{equation}
This is some times called Bernstein's Inequality.

\subsubsection*{Proof of \eqref{uuprime}}
Take $u=g_j\star F$, $\lambda=2^j$, $r_1=2^{-1}$ and $r_2=2$ in \eqref{Bernstein}. Then, the left hand side inequality yields,
\[
    2^{(\alpha+1)j}\|K_jF\|_{\mathrm{L}^{\infty}(\T)}\leq C
     2^{\alpha j}\|K_j(F')\|_{\mathrm{L}^{\infty}(\T)}<\infty,
\]
for $F'\in B^{\alpha}_{\infty, \infty}(\T)$. Conversely, the right hand side inequality yields,
\[
     2^{\alpha j}\|K_j(F')\|_{\mathrm{L}^{\infty}(\T)}\leq C2^{(\alpha+1)j}\|K_jF\|_{\mathrm{L}^{\infty}(\T)}<\infty,
\]
for $F\in B^{\alpha+1}_{\infty, \infty}(\T)$. 

\subsubsection*{Proof of \eqref{beho}}
We know that $f\in \mathrm{C}^{\alpha}(\T)$, if and only if $S_1+S_2<\infty$, for
\[
    S_1=\sup_{x\in\T}|f(x)|
\]
and
\[
S_2=\sup_{\substack{x\in \T \\ h\not=0}}\frac{|f(x+h)-f(x)|}{|h|^{\alpha}}.
\]
Recall that, $f\in\mathrm{B}^\alpha_{\infty,\infty}(\T)$, if and only if $R<\infty$, for
\[
     R=\sup_{j=0,1,\ldots}\sup_{x\in\T}2^{\alpha j}|K_jf(x)|.
\]

Let $f\in \mathrm{B}_{\infty,\infty}^{\alpha}(\T)$. We show that $S_1$ and $S_2$ are finite.
Firstly note that,
\[
     f(x)=\sum_{j=0}^\infty K_jf(x).
\]
Hence, 
\[
     S_1\leq \sum_{j=0}^\infty \|K_jf\|_{\mathrm{L}^\infty(\T)}\leq \sum_{j=0}^\infty
\frac{R}{2^{\alpha j}}<\infty.
\]
Here we have used that $\alpha>0$.

Now, if
\[
    S_3=\limsup_{h\to 0} \left(\sup_{x\in \T}\frac{|f(x+h)-f(x)|}{|h|^{\alpha}}\right)<\infty,
\]
then $S_2<\infty$. For $j=0,1,\ldots$, let
\[
    S_4(j)=\limsup_{h\to 0} \left(\sup_{x\in \T}\frac{|K_j(f(x+h)-f(x))|}{|h|^{\alpha}}\right).
\]
Then, on the one hand,
\[
    S_3\leq \sum_{j=0}^\infty S_4(j).
\]
On the other hand, by the Mean Value Theorem, for suitable $|h_j|<2^{-2j}$,
\begin{align*}
S_4(j)&\leq \sup_{\substack{x\in \T \\ 0<|h|\leq 2^{-2j}}}\frac{|K_jf(x+h)-K_jf(x)|}{|h|^{\alpha}} \\
&\leq \sup_{\substack{x\in \T \\ 0<|h|\leq 2^{-2j}}}\frac{|(K_jf)'(x+h_j)||h|}{|h|^{\alpha}} \\
&= \sup_{0<|h|\leq 2^{-2j}}|h|^{1-\alpha}\sup_{x\in \T}|(g_j\star f)'(x+h_j)| \\
&\leq 2^{-2j(1-\alpha)}\|(g_j\star f)'\|_{\mathrm{L}^\infty(\R)}\\
&\leq C 2^j2^{-2j(1-\alpha)} \|g_j\star f\|_{\mathrm{L}^\infty(\R)}\\
&= C2^{-j(1-\alpha)}2^{\alpha j}\|K_jf\|_{\mathrm{L}^\infty(\T)}\\
&\leq C R 2^{-j(1-\alpha)}.
\end{align*}
Thus, indeed, $S_3<\infty$. Here we have used that $1-\alpha>0$. This confirms that $\mathrm{B}_{\infty,\infty}^\alpha(\T)\subseteq \mathrm{C}^{\alpha}(\T)$.

\medskip

Now, let us show that $\mathrm{C}^{\alpha}(\T)\subseteq\mathrm{B}_{\infty,\infty}^\alpha(\T)$. Assume that $f\in \mathrm{C}^\alpha(\T)$. That is $S_1<\infty$ and $S_2<\infty$. Considering $f$ as a periodic function of $x\in \R$, we have
\[
    S_1=\sup_{x\in\R}|f(x)|<\infty
\]
and
\[
S_2=\sup_{\substack{x\in \R \\ h\not=0}}\frac{|f(x+h)-f(x)|}{|h|^{\alpha}}<\infty.
\]
Our goal is to show that $R<\infty$.

Since $g\in\mathcal{S}(\R)$, then there exists a constant $c_3>0$, such that
\[
    |g_j(x)|\leq c_3\frac{2^j}{(1+2^j|x|)^{2}},
\]
for all $x\in \R$. 
Now for any $\varphi\in \R$, thought of as a constant periodic function, we have that $(g_j\star \varphi)(x)=\varphi\chi_{_j}(0)=0$ for $j=1,2,\ldots$. Then,
\[
    (g_j\star f)(x)=(g_j\star (f+\varphi))(x)
\]
for all $x\in \R$ and $j\in\N$. Thus,
\begin{align*}
|(g_j\star f)(x)|&\leq \int_{\R} |g_j(y)||f(x-y)+\varphi|\,\mathrm{d}y \\
&\leq c_3 2^j \int_{\R}\frac{|f(x-y)+\varphi|}{(1+2^j|y|)^2}\,\mathrm{d}y \\
&=c_3 \int_{\R}\frac{\left|f\left(x-\frac{z}{2^j}\right)+\varphi\right|}{(1+|z|)^2}\,\mathrm{d}z.
\end{align*}
for all $x\in \R$, $\varphi\in \R$ and $j\in \N$. 

This gives, taking $\varphi=-f(x)$, that
\[
     2^{\alpha j}|(g_j\star f)(x)|\leq c_3 2^{\alpha j}\int_{\R}\frac{\left|f\left(x-\frac{z}{2^j}\right)-f(x) \right|}{(1+|z|)^2}\,\mathrm{d}z=A_j(x)+B_j(x),
\]
where we split the integral as follows. The first term is,
\begin{align*}
    A_j(x)&=c_3 2^{\alpha j} \int_{-2^j}^{2^j}\frac{\left|f\left(x-\frac{z}{2^j}\right)-f(x)\right|}{(1+|z|)^2}\,\mathrm{d}z \\
&=c_3 \int_{-2^j}^{2^j}\frac{|z|^{\alpha}\left|f\left(x-\frac{z}{2^j}\right)-f(x)\right|}{\left(\frac{|z|}{2^j}\right)^{\alpha}(1+|z|)^2}\,\mathrm{d}z \\
&\leq c_3 S_2\int_{-\infty}^{\infty} \frac{|z|^\alpha}{(1+|z|)^2}\,\mathrm{d}z\leq c_4<\infty,
\end{align*}
for all $j=1,2,\ldots$ and $x\in \T$. Here we have used that $0<\alpha<1$. The second term is, 
\begin{align*}
B_j(x)&=c_3 2^{\alpha j} \int_{|z|\geq 2^j}\frac{\left|f\left(x-\frac{z}{2^j}\right)-f(x)\right|}{(1+|z|)^2}\,\mathrm{d}z \\
&\leq c_3 2^{\alpha j}2 S_1 \int_{|z|\geq 2^j} \frac{\mathrm{d} z}{(1+|z|)^2} \\
&\leq c_5 S_1 2^{(\alpha-1)j}\leq c_6<\infty,
\end{align*}
for all $j=1,2,\ldots$ and $x\in \T$. Here we have used that $\alpha<1$. Then $R\leq c_4+c_6<\infty$. This completes the proof of \eqref{beho}.

\end{document}